\theoremstyle{definition}
\newtheorem{theorem}{Theorem}[section]
\newtheorem*{proposition*}{Proposition}
\theoremstyle{definition}
\newtheorem{remark}[theorem]{Remark}
\newtheorem*{observation*}{Observation}
\newcommand{\N}{\mathbb{N}}
\newcommand{\calM}{\mathcal{M}}
\newcommand{\lesseq}{\preccurlyeq}
\DeclareMathOperator{\Homeo}{Homeo}
\DeclareMathOperator{\mcg}{Map}
\DeclareMathOperator{\Accu}{Accu}
\newcounter{notes}
\renewcommand{\paragraph}[1]{\medskip 
\noindent \textbf{#1}}
\title{Two results on end spaces of infinite type surfaces}
\author{Kathryn Mann and Kasra Rafi}
\date{}
\begin{document}

\begin{abstract}
We answer two questions about the topology of end spaces of infinite type surfaces and the action of the mapping class group that have appeared in the literature.  First, we
give examples of infinite type surfaces with end spaces that are not self-similar, but a unique maximal type of end, either a singleton or Cantor set.  Secondly, we use an argument of Tsankov to show that the ``local complexity" relation $\lesseq$ on end types gives an equivalence relation that agrees with the notion of being locally homeomorphic.  
\end{abstract}

\maketitle

\section{Introduction}
The paper \cite{largescale} introduced the notion of {\em self-similar} end spaces for infinite type surfaces, 
and proved that a self-similar end space necessarily contains a unique {\em maximal} type of end (with respect 
to the partial order defined below), with the set of ends of this type either a singleton or a Cantor set.  
This notion has turned out to be a useful one and it has appeared many times 
in the literature, see for instance \cite{BV, GRV, HQR, MT}.
A partial converse to this statement was proved in \cite[Prop. 4.8]{largescale}.
Namely, under the additional hypothesis that an infinite type surface $\Sigma$ contains no nondisplaceable 
subsurfaces, it was shown that the end space of $\Sigma$ is self-similar if and only if the set of maximal ends 
is either a singleton or a Cantor set of points of the same type.   However, the necessity of this extra hypothesis 
(no nondisplaceable subsurfaces) was not discussed there, raising the question of whether it could be eliminated.  
This appeared as Question 1.4 in \cite{LL}, and  Remark 6.2 in \cite{LV}.  Here we answer the question, 
showing the strict converse (without extra assumptions) to \cite[Prop. 4.8]{largescale} is false:  

\begin{theorem} \label{thm:main} 
There exist examples of surfaces that have non self-similar end spaces with a unique maximal type of end and set of maximal ends homeomorphic to a singleton or to a Cantor set.  
\end{theorem}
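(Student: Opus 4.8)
The plan is to build explicit examples, one for the singleton case and one for the Cantor set case, and to verify the three clauses (unique maximal type; set of maximal ends a singleton or a Cantor set; end space not self-similar) directly. Since \cite[Prop.~4.8]{largescale} gives the full equivalence as soon as there are no nondisplaceable subsurfaces, any such example must contain a nondisplaceable subsurface, so the construction is organised around one: we take a surface $\Sigma$ containing a finite-type subsurface $N$ that is pinned in place — every homeomorphism of $\Sigma$ carries $N$ to a subsurface meeting $N$ — because the complementary pieces of $N$ have end structures that cannot be interchanged with, nor absorbed into, one another. It is convenient to prescribe the structured end space $(\sE,\sE^{g})$ of $\Sigma$ (the ends together with the closed set of ends accumulated by genus) and recover $\Sigma$ from Kerékjártó's classification; the genus data is used in an essential way, as the rigidity that simultaneously creates $N$ and breaks self-similarity lives in the interplay between $\sE$ and $\sE^{g}$. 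In the first example one singles out an end $x_{0}\in\sE$ whose arbitrarily small clopen neighbourhoods all have the same, maximally complicated local type, yet are never homeomorphic, respecting $\sE^{g}$, to all of $(\sE,\sE^{g})$; in the second, $x_{0}$ is replaced by a nowhere-dense Cantor set $K$ of ends of that single local type.

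The verification then has three steps. First, one computes the relation $\lesseq$ on end types: only finitely many local types occur, they form a short chain (or near-chain), and one checks that a single type $\tau$ — the local type of $x_{0}$, respectively of every point of $K$ — dominates all the others. Using the fact that every end of a surface lies below a maximal one, ``$\tau$ dominates everything'' is precisely ``$(\sE,\sE^{g})$ has a unique maximal type'', and that type is realised only at $x_{0}$ (resp.\ exactly on $K$), so the set of maximal ends is a singleton (resp.\ a Cantor set) consisting of ends of one type. Second, one exhibits a finite clopen partition $\sE=\sE_{1}\sqcup\cdots\sqcup\sE_{n}$ no part of which contains a clopen subset homeomorphic, respecting $\sE^{g}$, to $(\sE,\sE^{g})$: the parts $\sE_{i}$ with $i\ge 2$ are arranged to contain no end of type $\tau$, hence no such copy, while $\sE_{1}$ is a small clopen neighbourhood of $x_{0}$ (resp.\ of $K$) inside which a copy is ruled out by a structural invariant. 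Third, one displays $N$ and checks nondisplaceability by the standard argument — any homeomorphic image of $N$ is again a subsurface of the same homeomorphism type with the same complementary pieces, and there is no room in $\Sigma$ to move it off itself — which also records why the examples are consistent with \cite[Prop.~4.8]{largescale}.

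The main obstacle is the second step. Because $\tau$ dominates every type, every local type already occurs in every neighbourhood of the maximal ends, so the invariant separating $(\sE,\sE^{g})$ from its small clopen neighbourhoods cannot be ``which types occur'', and it cannot be a Cantor–Bendixson rank computation, since counts of that kind do not distinguish a structure from its small neighbourhoods at a top point (a scattered, or locally Cantor, structure with a single top point is uniquely determined, hence self-similar). Instead one must isolate a feature of the \emph{global} arrangement of the genus-accumulated regions relative to the maximal ends — exactly the rigidity forcing $N$ to be nondisplaceable — and prove both that $(\sE,\sE^{g})$ carries this feature and that no small clopen neighbourhood of $x_{0}$ (resp.\ of $K$) does. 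Once that invariant is pinned down, the two cases run in parallel, with $\{x_{0}\}$ replaced throughout by $K$.
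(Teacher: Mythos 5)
Your outline never actually produces an example: the end space $(\sE,\sE^{g})$ is never specified, and the one ingredient on which everything hinges --- the invariant showing that no clopen piece of the chosen decomposition contains a copy of the whole end space --- is explicitly left as ``once that invariant is pinned down.'' That is not a deferrable detail; it is the entire content of the theorem, and your own discussion of the ``main obstacle'' shows the plan stops exactly where the proof has to begin. Moreover, two of the heuristics you impose steer the construction away from what works. First, you insist that only finitely many local types occur, forming a short chain, and that genus data be used in an essential way; the paper's construction does the opposite: it first manufactures an \emph{infinite} family of pairwise non-comparable end types (surfaces $D_n$ with one boundary component and a unique maximal end $z_n$, e.g.\ obtained by puncturing a disc along a Cantor set decorated by countable sets of Cantor--Bendixson rank $n$, plus a second Cantor set through $z_n$ --- a purely planar construction, so genus is not needed at all). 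Second, you rule out counting-type invariants, but a count is precisely what finishes the argument: copies of $D_k$ are distributed along a flute of Cantor trees $T_i$ (one $D_k$ on each pair of pants of $T_i$ indexed by a word of length $k-i$, hence $2^{k-i}$ copies), the unique maximal end is the limit $p_\infty$ of the trees, and for the decomposition $E = E_1 \sqcup (E\setminus E_1)$ with $E_1$ the end set of $T_1$, any copy of $E_1$ in the complement must avoid a neighborhood of $p_\infty$, hence lie in $E_2\cup\dots\cup E_N$; comparing the number of ends locally homeomorphic to $z_{N+1}$ ($2^N$ in $E_1$ versus $\sum_{i=1}^{N-1}2^i < 2^N$ in $E_2\cup\dots\cup E_N$) gives the contradiction. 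This count is a homeomorphism invariant exactly because the $z_n$ are pairwise non-comparable, which is why the toolkit of non-comparable types has to come first; nothing in your sketch supplies a substitute for it.

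Two smaller points. The comparison you propose --- distinguishing $(\sE,\sE^{g})$ from small clopen neighborhoods of the maximal end $x_0$ (resp.\ of $K$) --- is not the comparison needed: non-self-similarity requires a finite clopen decomposition no piece of which contains a copy of the whole space, and the efficient route is to take one piece $E_1$ away from the maximal ends (it contains no copy because it has no maximal end) and show the complementary piece contains no copy of $E_1$, which is where the counting lives. And while your observation that any example must contain a nondisplaceable subsurface is correct as a consequence of \cite[Prop.~4.8]{largescale}, there is no need to exhibit or verify one directly, so organizing the construction around such a subsurface adds a burden without closing the actual gap.
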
 

The partial order defined in  \cite[Section 4]{largescale} is as follows. For $x,y \in E$, we say $y \lesseq x$ 
if every neighborhood of $x$ contains a point locally homeomorphic to $y$, and we say $x$ and $y$ are of the same type if $x \lesseq y$ and $y \lesseq x$, so that $\lesseq$ descends to a partial order on types.   Informally, we think of $\lesseq$ as describing the local complexity of an end.  Of course, if $h(x) = y$ for some homeomorphism $h$ of $E$, then $x$ and $y$ are of the same type.  Here we prove the converse to this statement, following an argument of T. Tsankov.  This answers another question from \cite{largescale}.  

\begin{theorem} \label{thm:order} 
If $x \lesseq y$ and $y \lesseq x$, then there is a mapping class (equivalently, a homeomorphism) $h$ of $\Sigma$ taking $x$ to $y$. 
\end{theorem}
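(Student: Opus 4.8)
The plan is to work entirely with the end space $E = E(\Sigma)$. By the change-of-coordinates principle, the map from $\mcg(\Sigma)$ to the group of homeomorphisms of $E$ preserving the closed subset $E^g$ of ends accumulated by genus — and, where relevant, the further data recording ends accumulated by crosscaps — is surjective, so it suffices to produce a homeomorphism of $E$ preserving $E^g$ and carrying $x$ to $y$. Since $E$ is compact, metrizable, and totally disconnected, its Boolean algebra of clopen sets is countable and every point has a countable decreasing basis of clopen neighborhoods. The first step is an easy reduction: if $x$ and $y$ have clopen neighborhoods $A\ni x$, $B\ni y$ and a homeomorphism $\phi\from A\to B$ with $\phi(x)=y$ preserving $E^g$, then, after shrinking $A$ to a clopen neighborhood $A'$ of $x$ with $A'$ and $\phi(A')$ disjoint (possible unless $x=y$, a trivial case) and renaming $A',\phi(A')$ as $A,B$, the map equal to $\phi$ on $A$, to $\phi^{-1}$ on $B$, and to the identity on $E\setminus(A\cup B)$ is a homeomorphism of $E$ preserving $E^g$ and sending $x$ to $y$. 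So everything comes down to showing that $x\lesseq y$ and $y\lesseq x$ force $x$ and $y$ to have homeomorphic pointed neighborhoods (with $E^g$ respected).

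For that, I would build the local homeomorphism by a back-and-forth construction, following Tsankov. Concretely, the aim is a \emph{matched decomposition}: clopen neighborhoods $U\ni x$, $V\ni y$ together with disjoint clopen decompositions
\[
U\setminus\{x\}=\bigsqcup_{n\ge 0}A_n, \qquad V\setminus\{y\}=\bigsqcup_{n\ge 0}C_n,
\]
with $A_n\to x$ and $C_n\to y$ (every neighborhood of $x$, resp.\ $y$, contains all but finitely many pieces) and a homeomorphism $A_n\to C_n$ preserving $E^g$ for each $n$. Such data assemble into a homeomorphism $U\to V$ carrying $x$ to $y$ — continuity at $x$ is exactly the condition $C_n\to y$ — and since $x$ and $y$ are either both accumulated by genus or both not (immediate from the hypotheses, as this is a local property and each of $x,y$ is a limit of points locally homeomorphic to the other), this homeomorphism preserves $E^g$. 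One constructs the decomposition in stages: at stage $n$ one has matched pairs $(A_0,C_0),\dots,(A_{n-1},C_{n-1})$ and the leftover clopen neighborhoods $U_n=U\setminus(A_0\cup\dots\cup A_{n-1})$ of $x$ and $V_n=V\setminus(C_0\cup\dots\cup C_{n-1})$ of $y$; one peels off a new matched pair $(A_n,C_n)$, alternately arranging that $U_n$ or $V_n$ shrinks into a prescribed member of a fixed clopen neighborhood basis, and, running through an enumeration of a clopen basis of $E$, that every clopen piece of $U\setminus\{x\}$ and of $V\setminus\{y\}$ is eventually peeled. This secures $\bigcap_n U_n=\{x\}$, $\bigcap_n V_n=\{y\}$, and that the $A_n$ and $C_n$ both exhaust their targets and converge to $x$ and $y$.

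The substantial point — and the main obstacle — is that the single step can always be carried out: given the leftover neighborhoods $U_n\ni x$ and $V_n\ni y$, one must peel clopen $A_n\subseteq U_n\setminus\{x\}$ and $C_n\subseteq V_n\setminus\{y\}$ that are homeomorphic respecting $E^g$ while also shrinking, say, $U_n$ towards $x$, and never get stuck. This is where the two hypotheses enter symmetrically: because $y\lesseq x$, inside any clopen neighborhood of $x$ and any prescribed clopen neighborhood of $y$ one can find a clopen neighborhood of $y$ inside the prescribed one together with a clopen copy of it inside the neighborhood of $x$ and disjoint from $x$ (take a point there locally homeomorphic to $y$ and restrict a local homeomorphism), and symmetrically from $x\lesseq y$; and far from $x$ and $y$ there is room to match bulk pieces by common refinement. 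The delicate part is to interleave these facts so that the two decompositions pair up type-for-type all the way down to $x$ and $y$ — each piece peeled near one of the two points being matched with a copy, near the other point, of a neighborhood of the same homeomorphism type, arranged by invoking $x\lesseq y$ and $y\lesseq x$ alternately to realize, near each of $x$ and $y$, a copy of whatever neighborhood type is currently needed on the other side. Once this matching scheme is in place the remaining checks are routine, and second countability of $E$ keeps the work at each stage finite.
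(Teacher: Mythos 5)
Your reduction steps are fine (the surjectivity of $\mcg(\Sigma)\to\Homeo(E,E^g)$, the observation that $x$ and $y$ are both accumulated by genus or both not, and the swap of two disjoint clopen neighborhoods to globalize a pointed local homeomorphism -- the paper uses the same swap), but the heart of the theorem is exactly the step you defer: producing \emph{any} pair of pointed-homeomorphic clopen neighborhoods of $x$ and $y$ from the hypotheses $x\lesseq y$, $y\lesseq x$. Your back-and-forth scheme does not supply this. Two concrete problems. First, as set up, you fix essentially arbitrary clopen neighborhoods $U\ni x$, $V\ni y$ at the outset and then match all of $U\setminus\{x\}$ with all of $V\setminus\{y\}$; if that always succeeded it would show that \emph{every} pair of clopen neighborhoods of $x$ and $y$ is pointed-homeomorphic, which is false in general (take $U$ large and $V$ small), so the scheme must also select $U$ and $V$, and the hypotheses give no mechanism for doing so -- that selection is the theorem. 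Second, the single peeling step is not justified: to shrink $U_n$ to $x$ you must at some stage peel an ``annulus'' $U_n\setminus U_{n+1}$, and its points need not have types $\lesseq x$ (a type can appear in one annulus around $x$ without recurring in smaller neighborhoods), so the hypotheses -- which only produce, near $y$, copies of neighborhoods of points locally homeomorphic to $x$ or $y$ -- give no copy of such a piece inside $V_n\setminus\{y\}$ at all; ``matching bulk pieces by common refinement'' has no content here, since common refinement never produces homeomorphisms between non-homeomorphic clopen sets. The sentence ``the delicate part is to interleave these facts'' is precisely where a proof would have to live, and nothing in the proposal fills it.

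For comparison, the paper avoids any explicit matching. It passes to the Cantor set $C=\overline{H_x}=\overline{H_y}$, lets $G$ be the image of $\mcg(\Sigma)$ in $\Homeo(C)$, and argues by Baire category: orbits have the Baire property, the action is topologically transitive (both $U$ and $U'$ meet $H_x$), so by the topological zero-one law a non-meager orbit is comeager; the Kechris--Rosendal criterion reduces non-meagerness of the orbit of a point with dense orbit to showing $Vz$ is somewhere dense for each basic clopen-partition subgroup $V$, and this last verification is the only constructive ingredient -- and it is exactly the swap trick you use in your reduction, applied to a single small clopen neighborhood $W$ of $z$ and its image $gW$. Since $H_x$ and $H_y$ are both dense orbits, they are both comeager in $C$, hence equal, giving the local (and then global) homeomorphism. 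If you want a proof along your lines, you would need a genuinely new argument for the extension step, not an interleaving of the two accumulation hypotheses.
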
 

The outline of the paper is as follows.  First, we provide details on a local construction of non-comparable points (an idea sketched loosely in \cite{largescale}) as a tool towards the proof of Theorem \ref{thm:main}.  We then prove Theorem \ref{thm:main} building examples first in the case where the maximal end is a singleton, followed by the Cantor set case.  Finally, the proof of Theorem \ref{thm:order} is given in Section \ref{sec:order}.

\section{Toolkit for Theorem \ref{thm:main}: Non-comparable points} \label{Sec:non-comparable}

For a surface $\Sigma$, we denote the space of ends of $\Sigma$ by $E(\Sigma)$ 
or simply $E$.  We define an equivalence relation on the end space by saying points $x,x' \in E$ are \emph{locally homeomorphic} if there exists some clopen neighborhood of $x$ in $E$ that is 
homeomorphic to some clopen neighborhood of $x'$ via a homeomorphism taking $x$ to $x'$.  
This is equivalent to saying that there is a homeomorphism 
of $\Sigma$ such the the induced map on the end space sends $x$ to $x'$. For an end $x$, we let $\Accu(x)$ denote the set of accumulation points of the set of all ends locally homeomorphic to $x$.

We work with the relation $\lesseq$ on points of $E$ as given above in the introduction. One may equivalently define $y \lesseq x$ if $x \in \Accu(y)$.  We say $x$ and $y$ are {\em of the same type} if $x \lesseq y$ and $y \lesseq x$, so that $\lesseq$ descends to a partial order on types.  
We say $x$ is a {\em maximal type}
if $x \lesseq y$ implies $y \lesseq x$ and denote the set of maximal points in $E$ by $\calM(E)$.  
We say $x,y \in E$ are non-comparable if neither $x \lesseq y$ 
nor $y \lesseq x$ holds. See \cite{largescale} for more details and discussion. 

The first building block in our construction is a sequence of surfaces $D_n$ indexed by $n \in \N$, each
with one boundary component, such that $D_n$ contains a unique maximal end $z_n$ and for all 
$i \neq j$ the ends $z_i$ and $z_j$ are non-comparable (the reader should picture $D_i$ and $D_j$ 
as disjoint subsurfaces of $\Sigma$).

Note that a construction such as this is not possible when the surface is planar and 
has a countable number of ends. A classical result of Mazurkiewicz and Sierpinski 
\cite{MS} states that, for any surface with a countable set of ends, there exists a countable ordinal 
$\alpha$  such that the end space $E$ is homeomorphic to the ordinal 
$\omega^\alpha \cdot m + 1$ where $m$ is a positive integer. 
The assumption that $E$ has one maximal point implies that $m=1$.  
Now assume $D, D'$ are two genus zero surfaces with one boundary and a countable 
end space ($E$ and $E'$) such that that each end space has one maximal end 
($x\in E$ and $x'\in E'$). Then their end spaces are respectively homeomorphic to 
$\omega^\alpha + 1$ and  $\omega^{\alpha'}+ 1$ for some countable ordinals $\alpha$ and 
$\alpha'$. Now if $\alpha \leq \alpha$ then $x \lesseq x'$, which means $x$ and $x'$ 
are comparable. 

We carry out the construction in both remaining cases, namely, when the set of ends is uncountable and the surface is planar (the proof easily generalizes to non-planar surfaces), and when the set of ends is countable and the surface is non-planar.  

\subsection*{Uncountable planar case}
Let $D$ be a disc, let $C_n =Q_n \cup C \subset D$ be the union of a countable set $Q_n$ and 
a Cantor set $C$, with Cantor-Bendixson rank $n$ such that, for each derived set of $C_n$ that has
isolated points, the accumulation set of the isolated points contains the Cantor set.  
For example, one may take the $C$ to be the standard middle-thirds Cantor set, and insert in each missing interval a 
set homeomorphic to $\omega^n+1$ to form $Q_n$.   Now for each $C_n$, select a single point $z_n$ 
and let $C'_n$ be another Cantor set contained in $D$ so that $C_n \cap C'_n = \{z_n\}$.   Puncturing $D$ 
along $C_n \cup C'_n$ gives a surface $D_n$ with one boundary component such that $z_n$ is the unique 
maximal end.   By construction, $z_i$ and $z_j$ are non-comparable when $i \neq j$.  

\subsection*{Countable non-planar case} 
Let $D$ be a disk and let $\alpha$ and $\beta$ be two countable ordinals with $\beta < \alpha$. 
Let $E_\alpha$ be a subset of $D$ homeomorphic to $\omega^\alpha + 1$ and denote its (unique) maximal point by $z_{\alpha, \beta}$. Now, consider a  closed subset $E_\beta \subset E_\alpha$ homeomorphic to 
$\omega^\beta+1$ where $z_{\alpha,\beta}$ is again the maximal point of $E_\beta$. 
For every isolated point $y$ of $E_\beta$ remove a disk around $y$ (keeping these disks pairwise disjoint) and glue back 
in a one-ended, infinite genus surface with one boundary component. We also puncture 
$D$ along the remaining points of $E_\alpha$ to obtain a surface $D_{\alpha, \beta}$. The point 
$z_{\alpha, \beta}$ is the unique maximal end of this surface.  Moreover, for two pairs or countable ordinals $(\alpha, \beta)$ 
and $(\alpha', \beta')$ (satisfying $\beta < \alpha$ and $\beta' < \alpha'$) 
if $\alpha \geq \alpha'$ and $\beta < \beta'$ then $D_{\alpha, \beta}$ and 
$D_{\alpha', \beta'}$ are non-comparable. In fact, no end of $D_{\alpha, \beta}$ is of the same type
as $z_{\alpha', \beta'}$ and vice versa. Hence we can, for example, fix $\alpha$ and vary 
$\beta$ to get a countable family of surfaces with one boundary where the maximal points are 
non-comparable. 

\subsection*{Uncountably many non-comparable points}
It is also possible for a surface to contain uncountably many non-comparable points. 
For example, let $\Sigma$ be a sphere minus a Cantor set. Visualize $\Sigma$
as a union of pairs of pants. Enumerate the pairs of pants, remove a disk from each pair of pants, 
and glue back in a copy of $D_n$ to the $n$-th pair of pants.  Call the resulting surface $\Sigma'$. 
Then all the ends of $\Sigma'$ coming from 
$\Sigma$ are non-comparable since small enough neighborhoods of any two such ends 
contain non-comparable points. 

\section{Proof of Theorem \ref{thm:main}}

Now we construct the surface that will furnish the examples needed for Theorem \ref{thm:main}. 
We give the construction first for the case where $\calM(E)$ is a singleton. We then modify the 
construction to produce examples where $\calM(E)$ is a Cantor set.   

Start with a flute surface, meaning a sphere punctured along a sequence of points $p_1, p_2, ... $ accumulating 
at an end $p_\infty$.   For each $i \neq \infty$, replace a neighborhood of the puncture  $p_i$ with a Cantor 
tree $T_i$.  We think of $T_i$ as a union of pants surfaces, indexed by finite binary strings, so that the pants 
indexed by a string $s_1... s_n$ has cuffs glued to the pants indexed by $s_1 ... s_{n-1}$, $s_1 ... s_n 0$, and 
$s_1 ... s_n 1$, and the first pair of pants $P_\emptyset$ is glued on where the original puncture was removed.   
Now for each $i$, we will replace a countable set of discs in $T_i$  with discs homeomorphic to copies of the 
previously constructed discs $D_n$ (from either construction in the previous section), according to the following recipe. 

For tree $T_i$, place copies of $D_1, D_2, \ldots D_i$ on the first pants surface, $P_\emptyset$, and place a copy of $D_k$ on each pants indexed by a word of length $k-i$.   Thus, for each $k\geq i$ there are $2^{k-i}$ copies of $D_k$ on $T_i$.    Call the resulting punctured surface $S$.  An illustration is given in Figure \ref{fig:one_end}.  

  \begin{figure*}[h]
   \labellist 
  \footnotesize \hair 2pt
     \pinlabel $D_1$ at 315 68 
       \pinlabel $\ldots$ at 315 100 
         \pinlabel $D_i$ at 315 128 
      \pinlabel {\normalsize $T_i$} at 260 105 
      \pinlabel $D_{i+1}$ at 273 171 
      \pinlabel $D_{i+1}$ at 354 171
   \endlabellist
     \centerline{ \mbox{
 \includegraphics[width = 5.5in]{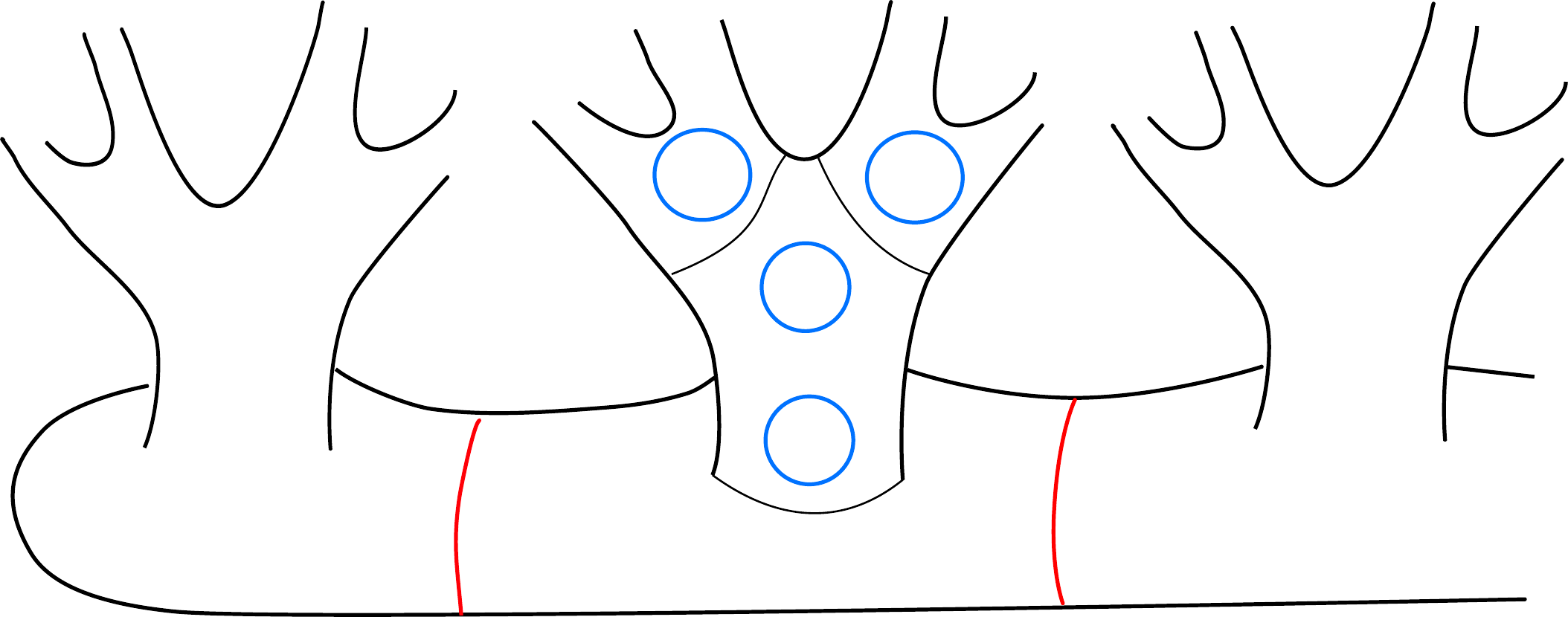}}}
 \caption{Construction of the surface with unique maximal end.}
  \label{fig:one_end}
  \end{figure*}

Note that all of the ends of each of the trees $T_i$ are pairwise locally homeomorphic.  The end $p_\infty$ of our surface $S$ is the unique accumulation point of these tree ends, so it is the unique maximal end.  We will now show that the end space of $S$ is not self-similar.   Let $E_i$ denote the end space of the tree $T_i$.  

Consider the decomposition of the end space $E_1 \sqcup (E - E_1)$.   Since $E_1$ does not contain a maximal end, to show the end space of the surface is not self-similar, it suffices to show that its complement contains no homeomorphic copy of $E_1$.   Suppose for contradiction that we could find such.  
Note the the sets $U_i := \bigcup_{n=i}^\infty E_n$ form a neighborhood basis of $p_\infty$ in the end space.  
Since $p_\infty$ is the unique maximal end and $E_1$ is closed, any homeomorphic copy of $E_1$ must avoid some neighborhood of $p_\infty$ so is contained in a finite union $E_2 \cup E_3 \cup \ldots \cup E_N$.   

By construction, $E_1$ contains $2^{N}$ locally homeomorphic copies of the end $z_{N+1}$.  But 
$E_2 \cup E_3 \cup \ldots \cup E_N$ contains $\sum_{i=1}^{N-1} 2^i < 2^N$ copies of $z_{N+1}$.  A contradiction.  Thus, $E_1$ cannot be mapped into its complement, so the end space is not self-similar.  

\subsection*{Cantor set case} 
A variation on the construction above can be used to produce a non self-similar surface with a unique maximal type and a Cantor set of maximal ends.  
First, following a similar procedure to the construction of the punctured trees $T_i$, for each $i \in \N$ we can build a Cantor tree $T'_i$ with a single boundary component that contains one copy of each of the discs $D_1$, $D_2$, ... $D_i$, and for each $k>i$ contains $2^{(2^{k-i})}$ copies of $D_k$, with each end of the tree locally homeomorphic.    

Now instead of starting with the flute, start with a Cantor tree constructed of pairs of pants indexed by binary strings, 
with the first pair of pants $P_\emptyset$ capped off by a disc on one of its boundary components.  From each pair of 
pants indexed by a string of length $i$, remove a disc and glue in a copy of $T'_{i+1}$ to it along its boundary. In particular, $T'_1$ is glued to the first pair of pants indexed by the empty string.  In the resulting surface, the ends of the original Cantor tree are precisely the maximal ends, forming a Cantor set of maximal ends of a single type.  We claim again that this is not self-similar.   To see this, let $E_1$ denote the end space of $T'_1$ and consider the decomposition of its end space into $E_1 \sqcup (E-E_1)$.   Suppose for contradiction that $E-E_1$ contained a homeomorphic copy of $E_1$.  As before, since $E_1$ and the set of maximal ends are both closed, the homeomorphic image of $E_1$ avoids some neighborhood of the maximal ends, so is contained in a union of end spaces of trees homeomorphic to $T'_i$ for a {\em bounded} set of indices $i$.   We consider the maximal such index $N$, and again count copies of ends of type $z_{N+1}$.  Without loss of generality, we may take $N\geq 4$.  
The set $E_1$ contains $2^{(2^{N})}$ copies of $z_{N+1}$.  Since our surface is constructed using $2^{k-1}$ copies of each tree $T'_k$, the number of copies of $z_{N+1}$ in the union of all trees $T'_k$ for $2 \leq k \leq N$ is equal to 
\[ 2 \cdot 2^{(2^{N-1})} + 2^2 \cdot 2^{(2^{N-2})} + \ldots + 2^{N-1}\cdot 2^{(2^{N-N+1})} \]
Set $j = 2^{N-1} + 1$. 
Then this sum is bounded above by 
\[ 2^j + 2^{j-1} + \ldots + 2^{j-N+2} < 2^{(2^N)} \] 
which gives the desired contradiction.

\section{Proof of Theorem \ref{thm:order}} \label{sec:order}

We now give the proof of Theorem \ref{thm:order}, following an argument of T. Tsankov.  The key ingredient is the following zero-one law for Baire sets invariant under certain actions of Polish groups.  

\begin{theorem} \label{thm:zeroone}(See \cite[Theorem 8.46]{Kechris}.)  
Let $G$ be a group of homeomorphisms of a Baire space $X$, and assume that for all open $U, V \subset X$ there exists $g \in G$ with $gU \cap V \neq \emptyset$.   Suppose that $A \subset X$ is a $G$-invariant set with the {\em Baire property}, meaning it differs from an open set (in the sense of symmetric difference) by a meager set.  
Then $A$ is either meager or has meager compliment in $X$.   
\end{theorem}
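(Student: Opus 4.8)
The plan is to use the Baire property to replace $A$ by an open set, and then to use the topological transitivity hypothesis to force that open set to be either empty or dense. Write $A = U \mathbin{\triangle} M$ with $U$ open and $M$ meager. First I would observe that this decomposition is compatible with the $G$-action: for each $g \in G$ the map $g$ is a homeomorphism, so $gU$ is open, $gM$ is meager, and from $A = gA = gU \mathbin{\triangle} gM$ one reads off $A \mathbin{\triangle} gU = gM$, which is meager. Combining this with $A \mathbin{\triangle} U = M$ via the triangle inequality for symmetric difference,
\[
U \mathbin{\triangle} gU \;\subseteq\; (U \mathbin{\triangle} A) \cup (A \mathbin{\triangle} gU) \;=\; M \cup gM ,
\]
so $U \mathbin{\triangle} gU$ is meager for every $g \in G$.

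Next I would dispatch the trivial alternative: if $A$ is meager we are done, so assume $A$ is not meager. Then $U \neq \emptyset$ (otherwise $A \subseteq M$ would be meager), and I claim $U$ is in fact dense. If it were not, there would be a nonempty open set $V$ disjoint from $U$; applying the transitivity hypothesis to the nonempty open sets $U$ and $V$ produces $g \in G$ with $gU \cap V \neq \emptyset$. But then $gU \cap V$ is a nonempty open subset of $gU \setminus U \subseteq U \mathbin{\triangle} gU$, hence meager, contradicting the fact that in a Baire space no nonempty open set is meager. Therefore $U$ is open and dense, so $X \setminus U$ is closed with empty interior, i.e.\ nowhere dense.

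Finally, using the identity $X \setminus (U \mathbin{\triangle} M) = (X \setminus U) \mathbin{\triangle} M$,
\[
X \setminus A \;=\; (X \setminus U) \mathbin{\triangle} M \;\subseteq\; (X \setminus U) \cup M ,
\]
which is the union of a nowhere dense set and a meager set, hence meager; so $A$ has meager complement, which is the second alternative. I do not expect a genuine obstacle here: the proof is essentially bookkeeping with symmetric differences together with a single application of transitivity. The only points that need care are getting the direction of the identities $A \mathbin{\triangle} gU = gM$ and $X \setminus (U \mathbin{\triangle} M) = (X \setminus U) \mathbin{\triangle} M$ right, and reading the transitivity hypothesis as applying to \emph{nonempty} open sets (it is vacuous otherwise).
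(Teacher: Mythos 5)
Your argument is correct and complete. Note that the paper does not prove this statement at all — it is quoted from Kechris \cite[Theorem 8.46]{Kechris} as a known topological zero-one law — so there is no in-paper proof to compare against; what you have written is essentially the standard textbook argument: $G$-invariance of $A$ plus the Baire-property decomposition $A = U \mathbin{\triangle} M$ gives that $U \mathbin{\triangle} gU$ is meager for every $g$, and then topological transitivity together with the fact that no nonempty open subset of a Baire space is meager forces $U$ to be empty (so $A$ is meager) or dense (so $X \setminus A \subseteq (X\setminus U)\cup M$ is meager). The auxiliary facts you use implicitly (homeomorphisms preserve meagerness, the symmetric-difference identities, and that the transitivity hypothesis is meant for nonempty open sets) are all handled correctly, so the proof stands as a valid self-contained substitute for the citation.
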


\begin{proof}[Proof of Theorem \ref{thm:order}]
Suppose $x \lesseq y$ and $y \lesseq x$.  Let $H_x$ denote the ends that are locally homeomorphic to $x$ and $H_y$ the ends locally homeomorphic to $y$.  If $H_x$ is finite, then it is easy to see that $H_x = H_y$.  Otherwise, we have $\overline{H_x} = \overline{H_y}$, and every point of $\overline{H_x}$ is an accumulation point, therefore $\overline{H_x} \subset E$ is homeomorphic to a Cantor set.  Denote this cantor set by $C$. This set $C$ is preserved by the action of $\mcg(\Sigma)$ on the end space.   

We claim the following: if $z \in C$ has a dense orbit in $C$ under $\mcg(\Sigma)$, then this orbit is comeager in $C$.  Thus, there is only one such orbit, showing that $x$ is locally homeomorphic to $y$.  
That is, the claim proves the Theorem. 

To do this, let $G \subset \Homeo(C)$ denote the quotient of $\mcg(\Sigma)$ defined by restricting the action to $C$.  Since $\mcg(\Sigma)$ is a Polish group and the kernel of the restriction map is closed, so is its quotient $G$.  Furthermore, the action of $G$ on $C$ has the topological transitivity property that for every open $U$ and $U'$ in $C$, there exists some $g \in G$ with $gU \cap U' \neq \emptyset$, since both $U$ and $U'$ intersect $H_x$.  Finally, any orbit of $G$ is a set with the Baire property, being the continuous image of a Polish space, so one may thus apply the topological zero-one law (Theorem \ref{thm:zeroone}) and conclude that a non-meager orbit is necessarily comeager in $C$.  

Thus, it suffices to show that any point $z$ with a dense orbit has a non-meager orbit.  
By a condition of Kechris-Rosendal \cite[Prop 3.2]{KR}, for this it suffices to show that for any open subgroup $V$ of $G$, the orbit $Vz$ is 
somewhere dense in $C$.  Let $V$ be an open subgroup.  Without loss of generality, we may take $V$ to be the 
subgroup consisting of homeomorphisms preserving a fixed decomposition of $C$ into finitely many clopen sets 
$C = X_1 \sqcup \ldots \sqcup X_n$ (possibly permuting the clopen sets) since such open subgroups form a 
basis for the topology of the homeomorphisms of the Cantor set.  Again, without loss of generality, assume 
$z \in X_1$. We will show that $Vz$ is dense in $X_1$.  Given some open set $U \subset X_1$, there exists 
$g \in G$ such that $gz \in U$.  We assume also $gz \neq z$.  Let $W$ be a clopen neighborhood of $z$ in 
$E$ small enough $W \cap C \subset X_1$, $gW \cap C \subset X_1$ and $gW \cap W = \emptyset$. Now 
define a homeomorphism $h$ of $E$ to agree with $g$ on $W$, agree with $g^{-1}$ on $gW$, and restrict 
to the identity elsewhere.  Then the restriction of $h$ to $C$ (i.e. the image of $h$ in $G$) lies in $V$, and 
$h(z) = g(z) \in U$.  This is what we needed to show.  
\end{proof} 

\begin{remark}
This proof came about as a response to the question: {\em does a homeomorphism $h$ of 
$\{0,1\}^{\mathbb{Z}}$ that set-wise preserves each periodic orbit of the full shift necessarily preserve 
all orbits of the shift?}   Tsankov \cite{Tsankov} answered this question in the negative, showing that, as in the 
proof above, there is only a single dense orbit under the group of periodic-orbit preserving maps of 
$\{0,1\}^{\mathbb{Z}}$.  The argument above is a direct translation to this setting.  
\end{remark}


\subsection*{Acknowledgements}
K.M. was partially supported by NSF grant DMS-1844516. K.R was partially supported by NSERC Discovery grant RGPIN 06486. We would like to thank Jing Tao, Sanghoon Kwak, and Nicholas Vlamis for comments on an earlier version of this paper, and Todor Tsankov for furnishing the proof of Theorem \ref{thm:order}.

\end{document}